\newcommand{\ii}{{\boldsymbol{i}}}
\newcommand{\half}{\tfrac12}
\newcommand{\ihalf}{\tfrac\ii2}
\newcommand{\sqrthalf}{\tfrac{\sqrt3}2}
\newcommand{\CC}{\mathbb{C}}
\newcommand{\RR}{\mathbb{R}}
\newcommand{\ZZ}{\mathbb{Z}}
\newcommand{\Lt}{\Lambda}
\newcommand{\Ltt}{\widetilde\Lt}
\newcommand{\X}{\mathsf{X}}
\newcommand{\Xv}{\X^\vee}
\newcommand{\Q}{\mathsf{Q}}
\newcommand{\Qv}{\Q^\vee}
\newcommand{\Qvt}{\widetilde\Q^\vee}
\newcommand{\g}{\mathfrak{g}}
\newcommand{\h}{\mathfrak{h}}
\newcommand{\ttt}{\mathfrak{t}}
\newcommand{\z}{\mathfrak{z}}
\newcommand{\Hom}{\operatorname{Hom}}
\newcommand{\Ker}{\operatorname{Ker}}
\newcommand{\Gal}{\operatorname{Gal}}
\renewcommand{\Re}{\operatorname{Re}}
\renewcommand{\Im}{\operatorname{Im}}
\newcommand{\GL}{\mathrm{GL}}
\newcommand{\Gt}{\widetilde{G}}
\newcommand{\aff}{\text{\upshape aff}}
\newcommand{\ant}{\text{\upshape ant}}
\newcommand{\uni}{\text{\upshape uni}}
\newcommand{\red}{\text{\upshape red}}
\newcommand{\sms}{{\rm ss}}
\newcommand{\ssc}{{\rm sc}}
\newcommand{\spl}{\text{\upshape s}}
\newcommand{\cmp}{\text{\upshape c}}
\newcommand{\Ho}{\operatorname{H}}
\newcommand{\Zl}{\operatorname{Z}}
\newcommand{\Bd}{\operatorname{B}}
\newcommand{\Exp}{\mathcal{E}}
\newcommand{\Expt}{\widetilde{\mathcal{E}}}
\newcommand{\atimes}{\mathbin{\smash[b]{\underset{\text{\raisebox{2ex}{$\smash{\centerdot}$}}}{\times}}}}
\newtheorem{theorem}{Theorem}
\newtheorem{lemma}[theorem]{Lemma}
\newtheorem{corollary}[theorem]{Corollary}
\title[Component group of a real algebraic group]
{Computation of the component group \\ of an arbitrary real algebraic group}
\author{Dmitry A. Timashev}
\address{Lomonosov Moscow State University, Faculty of Mechanics and
Mathematics, Department of Higher Algebra, 119991 Moscow, Russia}
\email{timashev@mccme.ru}
\thanks{This work was supported by the Ministry of Education and Science of the Russian Federation
in the framework of the program of the Moscow Center for Fundamental and Applied Mathematics (agreement 075-15-2019-1621).}
\keywords{Real algebraic group, component group, real Galois cohomology, Rosenlicht decomposition, universal cover, split torus}
\subjclass{Primary: %
14L40
, 22E15
; Secondary: %
12G05
, 11E72
, 20G20
, 14K20
}
\dedicatory{To the memory of Aleksander Vasilyevich Mikhalev}
\begin{document}

\date{November 9, 2023}

\begin{abstract}
We compute explicitly the group of connected components $\pi_0G(\RR)$ of the real Lie group $G(\RR)$ for an arbitrary (not necessarily linear) connected algebraic group $G$ defined over the field $\RR$ of real numbers. In particular, it turns out that $\pi_0G(\RR)$ is always an elementary Abelian 2-group. The result looks most transparent in the cases where $G$ is a linear algebraic group or an Abelian variety. The computation is based on structure results on algebraic groups and Galois cohomology methods.
\end{abstract}

\maketitle

\section*{Introduction}

Let $G$ be an arbitrary algebraic group defined over the field $\RR$ of real numbers. If $G$ is connected (in the Zariski topology), then its complex locus $G(\CC)$ is also a connected (in the classical Hausdorff topology) complex Lie group, while the real locus $G(\RR)$ is a real Lie group, which is no longer connected in general: $G=\GL_n$ may serve as an example. It is natural to address a question about the structure of the group $\pi_0G(\RR)=G(\RR)/G(\RR)^\circ$ of connected components of~$G(\RR)$, where $G(\RR)^\circ$ denotes the identity component.

First results in this direction go back to \'E.~Cartan: if $G$ is semisimple and simply connected, then $G(\RR)$ is connected, i.e., the component group is trivial; see, e.g., \cite[Cor.\,4.7]{grp.red.add}. H.~Matsumoto proved in 1964 that, $\pi_0G(\RR)$ is an elementary Abelian 2-group whenever $G$ is a linear algebraic group; see \cite[Thm.\,1.2]{Matsumoto}. Recently M.~Borovoi and the author obtained a combinatorial description of the component group for a linear algebraic group $G$ as the stabilizer of a point for an explicitly given action of a certain finite group on a certain finite set, which allows one to compute the group $\pi_0G(\RR)$ in any given case; see \cite[Thm.\,9.2]{Galois:red}.

A weak point in this description is that it provides a method for computing the component group but does not lead to an explicit formula for the latter group. This defect was corrected by the author in~\cite{pi0:lin}, where an explicit formula for $\pi_0G(\RR)$ was obtained representing this group as a quotient of the cocharacter lattice of a maximal split torus in~$G$; see also~\cite{pi0:red}. In this paper, a similar formula is obtained for an arbitrary (not necessarily linear) connected algebraic group. This gives a final answer to the above question about the component group.

Here is the structure of the paper. We start by providing necessary material on algebraic groups (in Section~\ref{s:alg.grp.str}) and Galois cohomology (in Section~\ref{s:Galois}). In Section~\ref{s:pi0}, we prove Theorem~\ref{t:main}, which is the main result of the paper. In Section~\ref{s:example}, we illustrate our formula for the component group by an example of the real locus of an elliptic curve.

\section{Structure of connected algebraic groups}
\label{s:alg.grp.str}

\subsection{}

We fix a square root of $-1$ in the field $\CC$ of complex numbers and denote it by~$\ii$.

We consider algebraic groups defined over~$\RR$. We shall identify such an algebraic group $G$ with its complex locus $G(\CC)$ equipped with an anti-regular involutive group automorphism~$\sigma$ (the \emph{real structure}) defining the action of the Galois group $\Gal(\CC/\RR)$ on~$G(\CC)$. (The name ``anti-regular'' means that $\sigma$ sends regular functions on Zariski open subsets in $G$ to functions that are complex conjugate to regular ones.) In this setting, $G(\RR)=G^\sigma$ is the fixed point subgroup of~$\sigma$.

For an arbitrary algebraic or Lie group~$H$, denote by $H^\circ$ the connected component of identity in~$H$ and by $\h$ (the same lowercase German letter) the Lie algebra of~$H$.

We say that an algebraic or Lie group $H$ decomposes into an \emph{almost direct product} of subgroups $H_1$ and $H_2$, and denote it by
\begin{equation*}
  H=H_1\atimes H_2,
\end{equation*}
if $H={H_1\cdot H_2}$, the subgroups $H_1$ and $H_2$ commute with each other, and the intersection ${H_1\cap H_2}$ is finite (but not necessarily trivial).

\subsection{}

We shall need some general structural results on connected algebraic groups. We use a survey paper of M.~Brion \cite{alg.grp} as a source.

The most widely known are \emph{affine} algebraic groups, i.e., those which are affine varieties, from the algebro-geometric viewpoint. Each affine algebraic group is \emph{linear}, i.e., admits a faithful linear representation. The converse is also true: a linear algebraic group is isomorphic to a Zariski closed subgroup in~$\GL_n$ (for some~$n$) and therefore is an affine variety, whose coordinate algebra is generated by the matrix entries of the faithful linear representation.

An opposite class of algebraic groups is constituted by \emph{anti-affine} groups, which are the algebraic groups having no non-constant regular functions. Obviously, anti-affine algebraic groups are connected. Examples of anti-affine groups are given by Abelian varieties, i.e., those algebraic groups which are irreducible projective varieties. However the class of anti-affine groups is wider; see \cite[5.5]{alg.grp}.

For arbitrary connected algebraic groups, there is the following \emph{Rosenlicht decomposition}:
\begin{theorem}[{\cite[5.1.1]{alg.grp}}]
\label{t:Rosenlicht}
Let $G$ be a connected algebraic group. There exist the largest connected affine subgroup $G_\aff$ and the largest (connected) anti-affine subgroup $G_\ant$ in~$G$. The subgroup $G_\aff$ is normal and $G_\ant$ is central in~$G$, and
\begin{equation}
\label{e:Rosenlicht}
G=G_\aff\cdot G_\ant.
\end{equation}
\end{theorem}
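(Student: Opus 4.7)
My strategy is to combine Chevalley's structure theorem with a direct analysis of anti-affine subgroups. I would proceed in four stages.

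First, I would invoke Chevalley's theorem, producing a unique connected normal affine subgroup $N \triangleleft G$ such that $G/N$ is an Abelian variety; set $G_\aff := N$. To upgrade this from ``largest normal connected affine subgroup'' to ``largest connected affine subgroup,'' I would take any connected affine subgroup $H \subseteq G$ and consider its image in $G/N$. Since a morphism from an affine variety to a projective variety must have image of dimension zero, the image is a single point, hence $H \subseteq N$. Normality of $G_\aff$ is automatic from Chevalley's theorem.

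Second, I would construct $G_\ant$. The key technical input is that every connected anti-affine subgroup $H \subseteq G$ is central: using that any morphism from $H$ to an affine variety is constant, one argues that the conjugation action of $G$ on $H$ (which factors through an affine target) must be trivial. Granted centrality, any two anti-affine subgroups commute, and for commuting anti-affine $H_1, H_2$ the product $H_1 \cdot H_2$ is the image of $H_1 \times H_2$ under multiplication; the K\"unneth identity gives $\mathcal{O}(H_1 \times H_2) = \mathcal{O}(H_1) \otimes \mathcal{O}(H_2) = \CC$, so this product is again anti-affine. Consequently, the subgroup generated by all connected anti-affine subgroups is itself anti-affine --- this is $G_\ant$, and it is central by construction.

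Third, to establish $G = G_\aff \cdot G_\ant$, I would project onto the Abelian variety $G/G_\aff$ and show that the restriction of this projection to $G_\ant$ is surjective. Since all anti-affine subgroups sit in $Z(G)^\circ$, the problem reduces to the commutative case: inside the connected commutative group $Z(G)^\circ$, one analyzes the extension of its image in $G/G_\aff$ by its intersection with $G_\aff$ using the structure theory of commutative algebraic groups (tori, unipotent groups, and semi-Abelian varieties, together with the classification of extensions of an Abelian variety by an affine commutative group). From this classification one extracts an explicit anti-affine lift of $G/G_\aff$, yielding the required surjection.

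The main obstacle, as I see it, is twofold. The centrality of anti-affine subgroups is a subtle fact: it is really the observation that ``nothing regular can move'' in such a subgroup, which requires carefully reducing the conjugation action to a morphism into an affine target. Even harder is ensuring that $G_\ant$ is large enough to cover the Abelian variety quotient $G/G_\aff$; this depends on nontrivial input from the theory of extensions of Abelian varieties and ultimately on the fact that such extensions admit canonical anti-affine lifts. Once both ingredients are in place, the decomposition $G = G_\aff \cdot G_\ant$ is immediate.
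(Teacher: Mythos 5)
The paper offers no proof of this statement: it is imported verbatim from Brion's survey \cite{alg.grp}, so your sketch can only be judged against the standard arguments there. Your Stages 1 and 2 follow that standard line and are essentially sound, modulo one imprecision: it is false that a morphism from an affine variety to a projective variety must have zero-dimensional image ($\mathbb{A}^1\hookrightarrow\mathbb{P}^1$). What you actually need is that the image of a \emph{homomorphism} from a connected affine group into the abelian variety $G/N$ is a closed connected subgroup which is simultaneously affine and complete, hence trivial; with that fix, $H\subseteq N$ follows. Your treatment of $G_\ant$ (centrality of anti-affine subgroups via a rigidity-type argument, plus closure of the class under commuting products by the K\"unneth computation) correctly identifies the key inputs, even if the centrality step is only gestured at.

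The genuine gap is in Stage 3. Reducing to $Z(G)^\circ$ only helps if you already know that $Z(G)^\circ$ surjects onto $A=G/G_\aff$; but $G=G_\aff\cdot Z(G)^\circ$ is essentially the content of the theorem you are proving (it is a consequence of $G=G_\aff\cdot G_\ant$ together with centrality of $G_\ant$), so as written the argument is circular. The extension-theoretic analysis inside $Z(G)^\circ$ can at best produce an anti-affine subgroup covering the \emph{image} of $Z(G)^\circ$ in $A$, and nothing in your sketch forces that image to be all of $A$. The standard way around this, and the route taken in \cite{alg.grp}, is the affinization theorem: $\mathcal{O}(G)$ is a finitely generated algebra and the kernel of the canonical homomorphism $G\to\operatorname{Spec}\mathcal{O}(G)$ is exactly $G_\ant$, so that $G/G_\ant$ is affine; then $G/(G_\aff\cdot G_\ant)$ is a quotient both of the affine group $G/G_\ant$ and of the complete group $G/G_\aff$, hence is affine and complete, hence trivial, which yields $G=G_\aff\cdot G_\ant$ at once. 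Your Stage 3 needs either this input or Rosenlicht's original rigidity argument giving $G=G_\aff\cdot C_G(G_\aff)^\circ$ directly; the classification of extensions of an abelian variety by an affine commutative group does not by itself close the gap.
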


The structure of affine (i.e., linear) algebraic groups is well known. There is a \emph{Levi decomposition} \cite[Thm.\,VIII.4.3]{alg.Lie}:
\begin{equation}\label{e:Levi}
  G_\aff=G_\uni\rtimes G_\red,
\end{equation}
where $G_\uni$ is the largest normal unipotent subgroup (\emph{unipotent radical}) of~$G$ and $G_\red$ is a maximal connected reductive subgroup (\emph{Levi subgroup}). A Levi subgroup is not unique, but all Levi subgroups are conjugate to each other. There is an almost direct product decomposition:
\begin{equation}\label{e:red}
  G_\red=G_\sms\atimes S,
\end{equation}
where $G_\sms=[G_\red,G_\red]$ is a connected semisimple group and $S=Z(G_\red)^\circ$ is an algebraic torus.

The structure of anti-affine algebraic groups is described in \cite[5.5]{alg.grp}. There is an exact sequence
\begin{equation*}
1 \longrightarrow (G_\ant)_\aff = (G_\ant\cap G_\aff)^\circ \longrightarrow G_\ant \longrightarrow A \longrightarrow 1,
\end{equation*}
where $A$ is an Abelian variety and $(G_\ant)_\aff$ is a connected commutative linear algebraic group. There is a decomposition
\begin{equation*}
  (G_\ant)_\aff=S_0\times V,
\end{equation*}
where $S_0$ is an algebraic torus and $V$ is a commutative unipotent group, i.e., the additive group of a vector space (a \emph{vector group}). Theorem~\ref{t:Rosenlicht} and the decompositions \eqref{e:Levi} and \eqref{e:red} imply inclusions $S_0\subseteq S$ and $V\subseteq G_\uni$.

The decompositions \eqref{e:Rosenlicht}--\eqref{e:red} imply that $Z=S\cdot G_\ant$ is a connected commutative algebraic subgroup in~$G$ and
\begin{equation}\label{e:gen.Levi}
G=G_\uni\cdot\bigl(G_\sms\atimes Z\bigr).
\end{equation}

\subsection{}

Let $T$ be an algebraic torus of dimension~$n$ defined over~$\RR$. The torus $T$ is called \emph{split} if there exists a coordinate system (i.e., an isomorphism $T(\CC)\simeq\CC^\times\times\dots\times\CC^\times$) in which the real structure is given by the formula
\begin{equation*}
  \sigma(t_1,\dots,t_n)=(\bar{t}_1,\dots,\bar{t}_n),
\end{equation*}
and $T$ is called \emph{anisotropic} if there exists a coordinate system in which
\begin{equation*}
  \sigma(t_1,\dots,t_n)=(\bar{t}_1^{-1},\dots,\bar{t}_n^{-1}).
\end{equation*}
(The bar here denotes complex conjugation.) If $T$ is split, then ${T(\RR)\simeq(\RR^\times)^n}$. If $T$ is anisotropic, then $T(\RR)\simeq(U_1)^n$ is a compact real torus; here $U_1$ denotes the unit circle on the complex plane. In general, there exist the largest split subtorus $T_\spl\subseteq T$ and the largest anisotropic subtorus $T_\cmp\subseteq T$, so that there is an almost direct product decomposition $T=T_\spl\atimes T_\cmp$.

Let $\Xv=\Xv(T)=\Hom(\CC^\times,T)$ be the cocharacter lattice of~$T$. By identifying cocharacters with their differentials at the unity, we may (and will) regard $\Xv$ as a lattice in~$\ttt$. There is an exact sequence
\begin{equation*}
  0 \longrightarrow \ii\Xv \longrightarrow \ttt \overset{\Exp}\longrightarrow T \longrightarrow 1,
\end{equation*}
where $\Exp$ is the scaled exponential map given by the formula $\Exp(x)=\exp(2\pi{x})$. This map can be regarded as the universal covering of the complex Lie group~$T(\CC)$.

The involution $\sigma$ acts on~$\ttt$ preserving the lattice~$\Xv$. The sublattices $\Xv_+$ and~$\Xv_-$ consisting of cocharacters preserved and multiplied by~$-1$ under the action of~$\sigma$ are just the cocharacter lattices of $T_\spl$ and~$T_\cmp$, respectively.

\section{Galois cohomology}
\label{s:Galois}

As in our previous paper~\cite{pi0:lin}, we use techniques of Galois cohomology over real numbers for computation of the component group. For the reader's convenience, we recall here briefly necessary material on Galois cohomology by following \cite[\S2]{pi0:lin}. We refer the reader to the classical book of J.-P.~Serre \cite{Galois} for a detailed exposition.

\subsection{}

Let $A$ be an arbitrary group on which the Galois group $\Gal(\CC/\RR)$ acts by group automorphisms. Such an action is defined by an involutive automorphism $\sigma$ of~$A$.\showhyphens{automorphism}

An element $z\in A$ is called a \emph{$1$-cocycle} with coefficients in $A$ if $\sigma(z)=z^{-1}$. The group $A$ acts by twisted conjugation on the set $\Zl^1(\RR,A)$ of 1-cocycles:
\begin{equation*}
z\overset{a}\longmapsto a\cdot z\cdot\sigma(a)^{-1}.
\end{equation*}
The orbit $[z]$ of the cocycle $z$ is called its \emph{cohomology class} and the orbit set
\begin{equation*}
\Ho^1(\RR,A)=\Zl^1(\RR,A)/A
\end{equation*}
is called the \emph{first cohomology set} of $\Gal(\CC/\RR)$ with coefficients in~$A$ or, for short, the first Galois cohomology set of~$A$. This set contains a base point, which is the class $[1]$ of the trivial cocycle, also denoted by $\Bd^1(\RR,A)$ and called the set of \emph{$1$-coboundaries}. In case $A$ is Abelian the sets $\Zl^1(\RR,A)$, $\Bd^1(\RR,A)$, and $\Ho^1(\RR,A)=\Zl^1(\RR,A)/\Bd^1(\RR,A)$ are Abelian groups.

Any short exact sequence of groups
\begin{equation}\label{e:short}
1 \longrightarrow A \longrightarrow B \longrightarrow C \longrightarrow 1
\end{equation}
on which $\Gal(\CC/\RR)$ acts in a compatible way gives rise to a long exact cohomology sequence
\begin{equation}\label{e:long.cohom}
1 \longrightarrow A^\sigma \longrightarrow B^\sigma \longrightarrow C^\sigma \longrightarrow
\Ho^1(\RR,A) \longrightarrow \Ho^1(\RR,B) \longrightarrow \Ho^1(\RR,C).
\end{equation}
The maps in the sequence \eqref{e:long.cohom} are obvious, except for the fourth one from the left, which is given by the formula
\begin{equation*}\label{e:connect}
c\longmapsto[b^{-1}\sigma(b)],\qquad\forall c\in C^\sigma,
\end{equation*}
where $b\in B$ is an arbitrary preimage of~$c$, so that $b^{-1}\sigma(b)\in A$. Exactness is understood in the sense of maps of pointed sets.

In the case where the exact sequence \eqref{e:short} splits, i.e., $B=A\times C$, the picture simplifies: it is easy to see that
\begin{equation*}
  \Ho^1(\RR,B)\simeq\Ho^1(\RR,A)\times\Ho^1(\RR,C).
\end{equation*}

\subsection{}

If $U$ is a unipotent linear algebraic group defined over~$\RR$, then $\Ho^1(\RR,U)=\{[1]\}$. An elementary proof of this well-known fact (see \cite[Lemma~6.2(ii)]{Galois:red}) is based on an observation that any element in a unipotent group possesses a unique square root (this follows from bijectivity of the exponential map for unipotent groups). Specifically, each $z\in\Ho^1(\RR,U)$ is represented as $z=z^{1/2}\cdot\sigma(z^{-1/2})$ (since $z^{1/2}$ is also a cocycle), i.e., is a coboundary.

\section{Component group}
\label{s:pi0}

Let $G$ be a connected algebraic group defined over~$\RR$. Our goal is to compute the group of connected components $\pi_0G(\RR)=G(\RR)/G(\RR)^\circ$ of the real locus~$G(\RR)$. We use the notation from Section~\ref{s:alg.grp.str}.

\subsection{}

The decomposition \eqref{e:gen.Levi} extends to real loci.

\begin{lemma}
\label{l:gen.Levi(R)}
$G(\RR)=G_\uni(\RR)\cdot\bigl(G_\sms\atimes Z\bigr)(\RR)$.
\end{lemma}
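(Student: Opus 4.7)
The plan is to descend the complex decomposition \eqref{e:gen.Levi} to real points by two applications of the vanishing of $\Ho^1$ with unipotent coefficients. First, I note that $G_\uni$ (the unipotent radical of the canonical subgroup $G_\aff$) and $G_\ant$ are $\sigma$-stable, and that a Levi subgroup $G_\red$ of $G_\aff$ can be chosen $\sigma$-stable; consequently $G_\sms = [G_\red,G_\red]$, $S = Z(G_\red)^\circ$, $Z = S\cdot G_\ant$ and $H := G_\sms \atimes Z$ are all defined over~$\RR$. The inclusion $G_\uni(\RR)\cdot H(\RR) \subseteq G(\RR)$ is obvious, so it remains to prove the reverse.

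Since $G_\uni$ is normal in $G$ and $G = G_\uni \cdot H$ by~\eqref{e:gen.Levi}, the subgroup $K := G_\uni \cap H$ is normal in $H$, and the composition $H \hookrightarrow G \twoheadrightarrow G/G_\uni$ induces an isomorphism $H/K \simeq G/G_\uni$. Crucially, $K$ is a closed subgroup of the unipotent group $G_\uni$, hence itself unipotent, so that $\Ho^1(\RR, K) = \Ho^1(\RR, G_\uni) = \{[1]\}$ by the fact recalled in Section~\ref{s:Galois}. The long exact cohomology sequences~\eqref{e:long.cohom} attached to
\begin{equation*}
1 \to G_\uni \to G \to G/G_\uni \to 1 \quad\text{and}\quad 1 \to K \to H \to G/G_\uni \to 1
\end{equation*}
then yield surjectivity of both $G(\RR) \to (G/G_\uni)(\RR)$ and $H(\RR) \to (G/G_\uni)(\RR)$.

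Finally, for $g \in G(\RR)$, I lift its image in $(G/G_\uni)(\RR)$ to some $h \in H(\RR)$ via the second surjection; then $u := g h^{-1}$ lies in $G_\uni$ and is fixed by~$\sigma$, so $u \in G_\uni(\RR)$ and $g = u\cdot h$ is the required factorisation. The one delicate preparatory point is the existence of a $\sigma$-stable Levi decomposition of~$G_\aff$, needed to make $H$ stable under the real structure; once this is in hand, the rest of the argument is essentially forced by the triviality of $\Ho^1(\RR,-)$ on unipotent groups, applied once to $G_\uni$ and once to the unipotent intersection~$K$.
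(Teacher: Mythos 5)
Your proof is correct, and it reaches the conclusion by a cohomological repackaging of essentially the same underlying fact that the paper uses. The paper argues element-wise: starting from a complex decomposition $g=u\cdot h$ of a real point~$g$, it observes that the defect $v=u^{-1}\sigma(u)=h\,\sigma(h)^{-1}$ lies in the intersection $G_\uni\cap\bigl(G_\sms\atimes Z\bigr)$ --- identified there with the vector group $V$ coming from the structure of $(G_\ant)_\aff$ --- that $v$ is a $1$-cocycle, and that it is killed by its unique square root $v^{1/2}$, which lets one correct $u$ and $h$ into real points. Your version runs the same square-root mechanism through the formalism instead: two long exact sequences over the common quotient $G/G_\uni$, with $\Ho^1(\RR,G_\uni)=\Ho^1(\RR,K)=\{[1]\}$ doing all the work. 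A small advantage of your route is that you need not identify $K=G_\uni\cap H$ with the specific vector group $V$ --- any closed subgroup of $G_\uni$ is unipotent in characteristic zero, so the vanishing applies --- at the modest cost of invoking quotients of possibly non-affine algebraic groups and the surjectivity of $G(\CC)\to(G/G_\uni)(\CC)$, both of which are standard. The one hypothesis you rightly flag, the existence of a $\sigma$-stable Levi subgroup of $G_\aff$, is needed equally by the paper (otherwise the right-hand side of the lemma is not even defined over~$\RR$) and is guaranteed by Mostow's theorem on Levi decompositions over fields of characteristic zero.
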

\begin{proof}
Take any $g\in G(\RR)$. We can decompose it as $g=u\cdot h$, where $u\in G_\uni(\CC)$, $h\in\bigl(G_\sms\atimes Z\bigr)(\CC)$, so that $g=\sigma(g)=\sigma(u)\cdot\sigma(h)$. Hence $v:=u^{-1}\cdot\sigma(u)=h\cdot\sigma(h)^{-1}\in V(\CC)$ and $\sigma(v)=v^{-1}$, i.e., $v$~is a Galois 1-cocycle. Since $V$ is a vector group, there exists a unique square root $v^{1/2}\in V(\CC)$, which is also a 1-cocycle, i.e., $\sigma(v^{1/2})=v^{-1/2}$. By replacing $u$ with $u\cdot v^{1/2}$ and $h$ with $v^{-1/2}\cdot h$, we get $u\in G_\uni(\RR)$, $h\in\bigl(G_\sms\atimes Z\bigr)(\RR)$.
\end{proof}

The exponential map $\exp:\g_\uni\to G_\uni$ is an isomorphism of algebraic varieties over $\RR$ and therefore yields a diffeomorphism between $\g_\uni(\RR)$ and~$G_\uni(\RR)$. Thus, $G_\uni(\RR)$ is a connected Lie group and Lemma~\ref{l:gen.Levi(R)} implies a canonical isomorphism
\begin{equation*}
\pi_0G(\RR)\simeq\pi_0(G_\sms\atimes Z)(\RR).
\end{equation*}
For this reason, we may (and will) assume in the sequel without loss of generality that $G=G_\sms\atimes Z$, where $G_\sms=[G,G]$ is semisimple, and $Z=Z(G)^\circ$ is the connected center of~$G$.

\subsection{}

We proceed with the same ideas as in~\cite{pi0:lin}. Let $G_\ssc$ be a simply connected semisimple algebraic group which covers~$G_\sms$. Consider a group $\Gt=G_\ssc\times\z$, where $\z$ is regarded as a vector group. The Lie group $\Gt(\CC)$ is simply connected and there is an exact sequence
\begin{equation}\label{e:uni.cover}
1 \longrightarrow \Gamma \longrightarrow \Gt=G_\ssc\times\z \overset\tau\longrightarrow G=G_\sms\atimes Z \longrightarrow 1.
\end{equation}
The universal covering map $\tau$ is given by the formula
\begin{equation*}
\tau(\tilde{g})=\tau_\ssc(g_\ssc)\cdot\Exp(z),\qquad\forall\;\tilde{g}=(g_\ssc,z),\ g_\ssc\in G_\ssc,\ z\in\z,
\end{equation*}
where $\tau_\ssc:G_\ssc\to G_\sms$ is the universal covering of the derived subgroup of~$G$ and $\Exp:\z\to Z$ is the scaled exponential map given by the formula
\begin{equation}\label{e:Exp}
\Exp(z)=\exp(2\pi{z}).
\end{equation}
The group $\Gamma=\Ker(\tau)$ is a discrete central subgroup in $\Gt(\CC)$ isomorphic to the fundamental group of~$G(\CC)$. It is worth noting that in general $\tau$ is a homomorphism of complex Lie groups, not of algebraic groups.

Let $T_\sms$ be a maximal torus in $G_\sms$ defined over~$\RR$. Then $T_\ssc=\tau_\ssc^{-1}(T_\sms)$ is a maximal torus in~$G_\ssc$ and $\Xv(T_\ssc)=\Qv$ is the coroot lattice of~$G_\sms$. There is a commutative diagram with exact rows and columns:
\begin{equation*}
\xymatrix{
         &               &  0                                            &            0                         &   \\
1 \ar[r] & \Gamma \ar[r] &    T_\ssc\times\z \ar[r]^-\tau \ar[u]         &   T_\sms\atimes Z \ar[r] \ar[u]      & 1 \\
         &    0   \ar[r] & \ttt_\ssc\oplus\z \ar[r]^-\sim \ar[u]^\Expt & \ttt_\sms\oplus\z \ar[r] \ar[u]^\Exp & 0 \\
         &    0   \ar[r] &            \ii\Qv \ar[r] \ar[u]               &         \ii\Lt    \ar[u]             &   \\
         &               &               0   \ar[u]                      &            0      \ar[u]             &   \\
}\end{equation*}
Here $\Exp$ and $\Expt$ are the scaled exponential maps for the respective connected commutative Lie groups, given by the formul{\ae} similar to~\eqref{e:Exp}, and $\Expt(x,z)=(\Exp_\ssc(x),z)$ for any $x\in\ttt_\ssc$, $z\in\z$, where $\Exp_\ssc:\ttt_\ssc\to T_\ssc$ is also the scaled exponential map. The lattice $\Lt=\ii\cdot\Ker(\Exp)$ is preserved by the action of~$\sigma$ and
\begin{equation*}
\Gamma=\Expt(\ii\Lt)\simeq\ii\Lt\,/\,\ii\Qv.
\end{equation*}

\subsection{}

As in \cite[3.2]{pi0:lin}, we write down a fragment of the exact Galois cohomology sequence \eqref{e:long.cohom} related to the exact sequence of groups~\eqref{e:uni.cover}:
\begin{equation*}
\cdots \longrightarrow \Gt(\RR) \longrightarrow G(\RR) \longrightarrow \Ho^1(\RR,\Gamma) \longrightarrow \Ho^1(\RR,\Gt) \longrightarrow \cdots
\end{equation*}
The Lie group $\Gt(\RR)=G_\ssc(\RR)\times\z(\RR)$ is connected, because both factors are connected. Hence the covering $\tau$ maps $\Gt(\RR)$ onto~$G(\RR)^\circ$. We also have
\begin{equation*}\label{e:H1}
\Ho^1(\RR,\Gt) \simeq \Ho^1(\RR,G_\ssc) \times \Ho^1(\RR,\z) \simeq \Ho^1(\RR,G_\ssc)
\end{equation*}
by vanishing of the Galois cohomology of the vector group~$\z$. We thus arrive at an exact sequence
\begin{equation}\label{e:exact}
1 \longrightarrow \pi_0G(\RR) \overset\delta\longrightarrow \Ho^1(\RR,\ii\Lt\,/\,\ii\Qv) \overset{\iota}\longrightarrow \Ho^1(\RR,G_\ssc).
\end{equation}
The map $\delta$ sends a connected component $gG(\RR)^\circ$ of $G(\RR)$ to the cohomology class $[\ii\lambda+\ii\Qv]$ such that $\Expt(\ii\lambda)=\tilde{g}^{-1}\sigma(\tilde{g})\in\Zl^1(\RR,\Gamma)$, where $\tilde{g}\in\Gt$ is such that $\tau(\tilde{g})=g$. The map $\iota$ sends a cohomology class $[\ii\lambda+\ii\Qv]$ to the cohomology class $[\Exp_\ssc(\ii\lambda_\ssc)]$, where $\lambda_\ssc$ is the projection of $\lambda\in\ttt_\ssc\oplus\z$ to~$\ttt_\ssc$.

It remains to compute $\Ker(\iota)$.

\subsection{}
\label{ss:tori&lattices}

The maximal torus $T_\sms$ in $G_\sms$ decomposes into an almost direct product of a split torus and an anisotropic torus: $T_\sms=T_\spl\atimes T_\cmp$. Choose $T_\sms$ in such a way that its split part $T_\spl$ is a maximal split torus in~$G_\sms$.

The involution $\sigma$ acts linearly on the lattice~$\Lt$. The real vector space $\Lt_\RR$ spanned by this lattice decomposes into the direct sum of the eigenspaces~$\Lt_\RR^{\pm\sigma}$ on which $\sigma$ acts with eigenvalues $\pm1$, respectively. Consider the lattices $\Lt_\pm=\Lt\cap\Lt_\RR^{\pm\sigma}$, and also the lattices $\Ltt_\pm$ which are the images of $\Lt$ under the projections to $\Lt_\RR^{\pm\sigma}$ given by the formul{\ae}
\begin{equation*}
  \lambda\mapsto\lambda_\pm=\half(\lambda\pm\sigma(\lambda)),
\end{equation*}
respectively. There are obvious inclusions
\begin{equation*}
  \Lt_\pm\subseteq\Ltt_\pm\subseteq\half\Lt_\pm.
\end{equation*}
We use similar notation for any $\sigma$-stable lattice in~$\Lt_\RR$, in particular, for~$\Qv$.

\subsection{}

Here is the main result of this paper.
\begin{theorem}\label{t:main}
Let $G$ be a connected algebraic group defined over~$\RR$, $G_\ant\subseteq G$ be the largest anti-affine subgroup, $G_\red\subseteq G$ be a maximal connected reductive subgroup, $S=Z(G_\red)^\circ$ be its connected center, $G_\sms=[G_\red,G_\red]$ be its derived subgroup, $T_\sms\subseteq G_\sms$ be a maximal torus containing a maximal split torus $T_\spl\subseteq G_\sms$, and $Z=S\cdot G_\ant$. Then the group of connected components of the real locus $G(\RR)$ is computed by the following formula (in the notation of Subsection~\ref{ss:tori&lattices}):
\begin{equation*}
  \pi_0G(\RR) \simeq \Lt_+/(2\Ltt_++\Qv_+).
\end{equation*}
Here $\Lt$ is the lattice in the Lie algebra $\ttt_\sms\oplus\z$ of the algebraic group $T_\sms\atimes Z$ such that $\ii\Lt$ is the kernel of the universal covering $\Exp:\ttt_\sms\oplus\z\to T_\sms\atimes Z$, $\Exp(y)=\exp(2\pi{y})$, and $\Qv$ is the coroot lattice of $G_\sms$ with respect to~$T_\sms$. The connected component of $G(\RR)$ corresponding to the coset of $\lambda\in\Lt_+$ is represented by
\begin{equation*}
  \exp(\pi\ii\lambda)=\Exp(\ii\lambda/2).
\end{equation*}
\end{theorem}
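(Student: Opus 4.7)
The plan is to start from the exact sequence~\eqref{e:exact}, which already identifies $\pi_0G(\RR)$ with $\Ker(\iota)\subseteq\Ho^1(\RR,\Gamma)$, and then to describe both sides explicitly in terms of the lattice data. The task splits into two pieces: first compute $\Ho^1(\RR,\Gamma)$ from the definition, and then identify the subgroup $\Ker(\iota)$ as the claimed quotient $\Lt_+/(2\Ltt_++\Qv_+)$.

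For the cohomology computation, one works directly with the abelian $\sigma$-module $\ii\Lt/\ii\Qv$. Because $\sigma$ is antiholomorphic on the complex Lie algebra $\ttt_\sms\oplus\z$, we have $\sigma(\ii\lambda)=-\ii\sigma(\lambda)$ for $\lambda\in\Lt$, so a $1$-cocycle in $\ii\Lt/\ii\Qv$ corresponds to an element $\lambda\in\Lt$ with $(1-\sigma)\lambda\in\Qv$, while a $1$-coboundary corresponds to an element of the form $\ii(\lambda+\sigma(\lambda)+\nu)$ with $\lambda\in\Lt$ and $\nu\in\Qv$. Setting $\Lt^{(\sigma)}:=\{\lambda\in\Lt:(1-\sigma)\lambda\in\Qv\}$, this yields
\[
\Ho^1(\RR,\Gamma)\simeq\Lt^{(\sigma)}/(2\Ltt_++\Qv).
\]
The natural map $\Lt_+/(2\Ltt_++\Qv_+)\to\Ho^1(\RR,\Gamma)$ is injective: applying $1-\sigma$ to any decomposition $\lambda=2\mu+\nu\in\Lt_+$ with $\mu\in\Ltt_+$, $\nu\in\Qv$ forces $\nu\in\Qv_+$, hence $(2\Ltt_++\Qv)\cap\Lt_+=2\Ltt_++\Qv_+$. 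Its image lies in $\Ker(\iota)$: for $\lambda\in\Lt_+$, setting $g:=\exp(\pi\ii\lambda_\ssc)\in G_\ssc$ and using $\sigma(\lambda_\ssc)=\lambda_\ssc$ together with the antiholomorphy of $\sigma$ gives $\sigma(g)=g^{-1}$, so $g$ is itself a $1$-cocycle in $G_\ssc$ and $\Exp_\ssc(\ii\lambda_\ssc)=g^2=g\cdot\sigma(g)^{-1}$ is a coboundary. The same calculation justifies the stated representative $\exp(\pi\ii\lambda)=\Exp(\ii\lambda/2)\in G(\RR)$, since $(\exp(\pi\ii\lambda))^2=\Exp(\ii\lambda)=1$ forces $\sigma(\exp(\pi\ii\lambda))=\exp(\pi\ii\lambda)^{-1}=\exp(\pi\ii\lambda)$.

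The main obstacle is the reverse inclusion: that every class in $\Ker(\iota)$ is representable by some $\lambda\in\Lt_+$. Given $\lambda\in\Lt^{(\sigma)}$ with $\Exp_\ssc(\ii\lambda_\ssc)=a^{-1}\sigma(a)$ for some $a\in G_\ssc$, one must produce $\mu\in 2\Ltt_++\Qv$ with $\lambda-\mu\in\Lt_+$. Because $\sigma$ preserves the decomposition $\ttt_\sms\oplus\z$ and the obstruction $\nu:=\sigma(\lambda)-\lambda$ already lies in $\Qv\subset\ttt_\sms$, the $\z$-component of $\lambda$ is automatically $\sigma$-fixed, and the problem reduces to the semisimple part: finding $\alpha\in\Qv$ with $(1-\sigma)\alpha=-\nu$. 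The intended argument here is Weyl-group theoretic. The torus $aT_\ssc a^{-1}$ is $\sigma$-stable, so after modifying $a$ by an element of $T_\ssc$ (which shifts $\lambda_\ssc$ only within its $\ii\Qv$-coset) one may assume $a\in N_{G_\ssc}(T_\ssc)$; the associated Weyl element then encodes the relation, and combined with the simple connectedness of $G_\ssc$ and a lattice analysis analogous to the one in~\cite{pi0:lin} for the linear case, it produces the required $\alpha$. The new contribution here, relative to~\cite{pi0:lin}, is precisely the preceding reduction to the semisimple part via the observation that the $\z$-component contributes only $\sigma$-fixed data; once this is in place, the Weyl-group reduction proceeds as in the linear case.
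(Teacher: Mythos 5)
Your overall strategy coincides with the paper's: you use the exact sequence \eqref{e:exact}, compute $\Ho^1(\RR,\ii\Lt/\ii\Qv)$ by the same elementary lattice manipulation (your description of cocycles via $(1-\sigma)\lambda\in\Qv$ is equivalent to the paper's $\Lt\cap(\Ltt_++\half\Qv_-)$), and your verifications that $(2\Ltt_++\Qv)\cap\Lt_+=2\Ltt_++\Qv_+$, that classes coming from $\Lt_+$ die under $\iota$, and that $\Exp(\ii\lambda/2)$ represents the corresponding component are all correct and in places more explicit than the paper. Your observation that the $\z$-component of $\lambda$ is automatically $\sigma$-fixed, so that the whole question reduces to the semisimple part, is exactly the reduction the paper performs.

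The gap is in the one genuinely hard step: showing that every class in $\Ker(\iota)$ is represented by some $\lambda\in\Lt_+$, equivalently that if $\Exp_\ssc(\ii\lambda_\ssc)$ is a coboundary in $G_\ssc$ then $\lambda_-\in\Qvt_-$. Your sketch --- pass to $a\in N_{G_\ssc}(T_\ssc)$ after ``modifying $a$ by an element of $T_\ssc$'' --- does not work: multiplying $a$ on the right by $t\in T_\ssc$ leaves the torus $aT_\ssc a^{-1}$ unchanged, and multiplying on the left moves it to $taT_\ssc a^{-1}t^{-1}$, which equals $T_\ssc$ only if $aT_\ssc a^{-1}=T_\ssc$ already; moreover two $\sigma$-stable maximal tori of a real group need not be conjugate over $\RR$, so no such normalization is available. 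Even granting $a\in N_{G_\ssc}(T_\ssc)$, you never actually extract the required $\alpha\in\Qv$ with $(1-\sigma)\alpha=-\nu$; you defer to ``a lattice analysis analogous to the linear case.'' But that analysis is precisely the content of \cite[Lemma~4]{pi0:lin} (a central element $\Exp_\ssc(\ii\lambda_\ssc)$ of the simply connected group $G_\ssc$ is a coboundary if and only if it lies in $(T_\ssc)_\spl$, i.e.\ if and only if $\lambda_\ssc\in\Xv(T_\spl)_\RR+\Qv$), which rests on the known computation of Galois cohomology of simply connected real groups and is not recovered by your Weyl-group heuristic. The paper simply invokes that lemma; your proof must do the same (or reprove it), since without it the inclusion of $\Ker(\iota)$ into the image of $\Lt_+$ --- and hence the isomorphism itself --- remains unproved.
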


\begin{proof}[The proof\/\nopunct] is similar to the proof of Theorem~6 in~\cite{pi0:lin}. So we skip some technical details.

We use the exact sequence~\eqref{e:exact}. Similarly to the proof of \cite[Lemma~2]{pi0:lin}, we have:
\begin{align*}
  \Zl^1(\RR,\ii\Lt\,/\,\ii\Qv) &=      \ii\Lt\cap(\ii\Ltt_++\ihalf\Qv_-)/\ii\Qv, \\
  \Bd^1(\RR,\ii\Lt\,/\,\ii\Qv) &=      (2\ii\Ltt_++\ii\Qv)/\ii\Qv,                 \\
  \Ho^1(\RR,\ii\Lt\,/\,\ii\Qv) &\simeq \Lt\cap(\Ltt_++\half\Qv_-)/(2\Ltt_++\Qv).
\end{align*}
The image of a cohomology class $[\ii\lambda+\ii\Qv]$ under the map~$\iota$ is the cocycle $\Exp_\ssc(\ii\lambda_\ssc)\in Z(G_\ssc)$. Since $Z(G_\ssc)$ has finite order~$N$, we have $N\lambda_\ssc\in\Qv\subseteq\Xv(T_\sms)\subseteq\Lt$, whence $\lambda_\ssc\in\Xv(T_\sms)_\RR\subseteq\Lt_\RR$ and $(\lambda_\ssc)_+\in\Xv(T_\sms)_\RR^\sigma=\Xv(T_\spl)_\RR$.

Similarly to \cite[Lemma~4]{pi0:lin}, $\Exp_\ssc(\ii\lambda_\ssc)$ is a coboundary in $G_\ssc$ if and only if it belongs to~$(T_\ssc)_\spl$, i.e., $\lambda_\ssc\in{\Xv(T_\spl)_\RR+\Qv}$. Since $(\lambda_\ssc)_-=\lambda_-\in\half\Qv_-$\,, the coboundary condition is equivalent to $\lambda_-\in\Qvt_-$. As $\lambda$ is defined up to the shift by a vector in~$\Qv$, we may assume without loss of generality that $\lambda_-=0$, i.e., $\lambda\in\Lt_+$.

Thus
\begin{equation*}
  \pi_0G(\RR)\simeq\Ker(\iota)\simeq\Lt_+/(2\Ltt_++\Qv).
\end{equation*}
For $\lambda\in\Lt_+$, $\tilde{g}=\Expt(-\ii\lambda/2)$ is a cocycle in~$\Gt$, whence
\begin{equation*}
  \tilde{g}^{-1}\sigma(\tilde{g})=\tilde{g}^{-2}=\Expt(\ii\lambda)
\end{equation*}
and
\begin{equation*}
  g=\tau(\tilde{g})=\Exp(-\ii\lambda/2)=\Exp(\ii\lambda/2)
\end{equation*}
represents the connected component in~$G(\RR)$ corresponding to the coset of~$\lambda$.
\end{proof}

A straightforward consequence of Theorem~\ref{t:main} generalizes Matsumoto's theorem \cite[Thm.\,1.2]{Matsumoto}:
\begin{corollary}
$\pi_0G(\RR)$ is an elementary Abelian 2-group of finite order.
\end{corollary}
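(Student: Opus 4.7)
The plan is to deduce the corollary directly from the explicit formula $\pi_0G(\RR)\simeq\Lt_+/(2\Ltt_++\Qv_+)$ furnished by Theorem~\ref{t:main}. The quotient is automatically abelian, since it is a quotient of the lattice $\Lt_+$, so only two properties require verification: that every element has order dividing~$2$, and that the quotient is finite. Both should fall out of the chain of inclusions $\Lt_+\subseteq\Ltt_+\subseteq\half\Lt_+$ noted in Subsection~\ref{ss:tori&lattices}.

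First I would observe that the left inclusion $\Lt_+\subseteq\Ltt_+$ immediately yields $2\Lt_+\subseteq 2\Ltt_+\subseteq 2\Ltt_++\Qv_+$. Consequently, the natural surjection $\Lt_+\twoheadrightarrow\Lt_+/(2\Ltt_++\Qv_+)$ factors through $\Lt_+/2\Lt_+$, so every element of $\pi_0G(\RR)$ is killed by~$2$. This gives the ``elementary Abelian 2-group'' part of the statement.

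For finiteness, I would use that $\Lt_+$ has finite rank (being a sublattice of the full lattice $\Lt$ in $\ttt_\sms\oplus\z$); hence $\Lt_+/2\Lt_+$ is finite, and the further quotient $\Lt_+/(2\Ltt_++\Qv_+)$ is a fortiori finite. Putting the two observations together, $\pi_0G(\RR)$ is a finite elementary Abelian 2-group.

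There is essentially no obstacle here: once Theorem~\ref{t:main} is in hand, the corollary is a one-line consequence of the inclusion $\Lt_+\subseteq\Ltt_+$. The only thing to remark on is that the rank of $\Lt_+$ (and thus an upper bound on the order of $\pi_0G(\RR)$) is bounded by $\dim(T_\sms\atimes Z)$, which makes the finiteness claim transparent without any further structural input.
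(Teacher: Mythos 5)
Your proof is correct and is exactly the intended argument: the paper states the corollary as a straightforward consequence of Theorem~\ref{t:main}, and reading off that $2\Lt_+\subseteq 2\Ltt_++\Qv_+$ (via $\Lt_+\subseteq\Ltt_+$) together with the finiteness of $\Lt_+/2\Lt_+$ is precisely the one-line deduction meant there. No gaps.
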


\subsection{}

Theorem~\ref{t:main} takes the most transparent form in the cases where $G$ is a linear algebraic group or an Abelian variety.

The case of linear algebraic groups is examined in detail in~\cite{pi0:lin}. In this case, $Z=S$ is an algebraic torus, $\Lt=\Xv(T)$ is the cocharacter lattice of a maximal torus $T=S\atimes T_{\sms}$ in~$G$ containing a maximal split torus $T_\spl$ (here $T_\spl$ denotes a maximal split torus in~$G$, not in~$G_\sms$!), and $\Lt_+=\Xv(T_\spl)$.

In the case where $G$ is an Abelian variety, the groups $G_\aff$, $G_\red$, $G_\sms$, $G_\ssc$, $T_\sms$, $T_\ssc$, $T_\spl$, $S$ are trivial and $Z=G$. The exact sequence \eqref{e:uni.cover} reads as
\begin{equation*}\label{e:uni.cover:Abel}
  0 \longrightarrow \Lt \longrightarrow \g \overset\Exp\longrightarrow G \longrightarrow 1,
\end{equation*}
i.e., $\Lt$ is the (scaled) period lattice of the Abelian variety~$G$. In this case, $\Qv=0$ and $\pi_0G(\RR)\simeq\Lt_+/2\Ltt_+$.

\section{Example}
\label{s:example}

Examples of computation of component groups for linear algebraic groups were considered in \cite[\S4]{pi0:lin}. Here we consider the ``opposite'' case of Abelian varieties. As an example, we consider one-dimensional Abelian varieties, i.e., elliptic curves.

\subsection{}

Let $G$ be an elliptic curve defined over~$\RR$. In this case, $\g=\CC$ is the complex plane, the lattice $\Lambda\subset\CC$ has rank~2, and $G\simeq\CC/\ii\Lambda$. Since $G$ is defined over~$\RR$, the lattices $\Lambda$ and $\ii\Lambda$ are preserved by complex conjugation and, in particular, $\ii\Lambda$~intersects the real axis. After rescaling we may assume without loss of generality that $\ii\Lambda=\ZZ+\ZZ\omega$, where $-\half\le\Re(\omega)<\half$. By invariance under complex conjugation, there are only two possibilities for the fundamental period~$\omega$: either $\omega\in\ii\RR$ or $\omega\in-\half+\ii\RR$.

In the first case, $\ii\Lt_+=\ii\Ltt_+=\ZZ\omega$, hence $\pi_0G(\RR)\simeq\ZZ/2\ZZ$, i.e., $G(\RR)$~has two connected components.

In the second case, $\ii\Lt_+=\ZZ(\omega-\bar\omega)$, while $\ii\Ltt_+=\ZZ(\omega-\bar\omega)/2$. Hence $\pi_0G(\RR)=0$, i.e., $G(\RR)$~is connected.

\subsection{}

One can see that differently. Let us define $G$ in the Weierstrass normal form by the equation
\begin{equation*}
  y^2z=x^3+pxz^2+qz^3.
\end{equation*}
(Here $x,y,z$ are homogeneous coordinates on a projective plane). It is well known (see, e.g., \cite[Chap.\,I, \S6]{elliptic}) that the coefficients $p,q$ are given by Eisenstein series:
\begin{align*}
  p&=-15\sum_{\lambda\in\ii\Lt\setminus\{0\}}\frac{1}{\strut\lambda^4}\;, &
  q&=-35\sum_{\lambda\in\ii\Lt\setminus\{0\}}\frac{1}{\strut\lambda^6}\;.
\end{align*}
Consider the cubic trinomial $f(x)=x^3+px+q$ and its discriminant $D(f)=-4p^3-27q^2$.

If $\omega\in\ii\RR$, then, without loss of generality, $\Im(\omega)>0$ and $D(f)>0$.

Indeed, since $G$ is a smooth curve, $D(f)\ne0$ and therefore has constant sign on the ray $\ii\RR_{>0}$. Thus it suffices to consider a single value of $\omega\in\ii\RR_{>0}$, e.g., $\omega=\ii$.

In this case, $G$ possesses complex multiplication: the period lattice is preserved by multiplication with~$\ii$. On the other hand, when the period lattice is multiplied by~$\ii$, the series $q$ is multiplied by~$-1$. Hence $q=0$.

In this case, $\ii\Lambda=\ZZ[\ii]$ is the ring of Gaussian numbers, which is a Euclidean ring. Hence the series $p$ can be written in the form of an Euler product:
\begin{equation*}
  p=-15\cdot\bigl(1^4+(-1)^4+\ii^4+(-\ii)^4\bigr)\cdot\prod_{\rho}\frac{1}{1-1/\strut\rho^4}\;,
\end{equation*}
where the product is taken over all Gaussian primes~$\rho$ up to multiplication by invertible elements $\pm1,\pm\ii$ of~$\ZZ[\ii]$. Gaussian primes which are not integers can be grouped in pairs of mutually conjugate numbers, whence
\begin{equation*}
  p=-60\cdot\prod_{\Im(\rho)>0}\frac{1}{|1-1/\strut\rho^4|^2}\cdot\prod_{\Im(\rho)=0}\frac{1}{1-1/\strut\rho^4}<0\;.
\end{equation*}
It follows that $D(f)=-4p^3>0$.

Removing the point at infinity $g_\infty=(0:1:0)$, we can define $G(\RR)\setminus\{g_\infty\}$ by the equation $y^2=f(x)$ on the real coordinate plane. Since $f$ has three distinct real roots, it is clear that $G(\RR)$ has two connected components (see Fig.~\ref{f:2-component}).
\begin{figure}[!h]
  \centering
  \input{2-component.pic}
  \caption{}\label{f:2-component}
\end{figure}

If now $\omega\in-\half+\ii\RR$, then $D(f)<0$. The proof is similar to the previous case. Since $D(f)$ has constant sign on the ray $-\half+\ii\RR_{>0}$, we may assume without loss of generality that $\omega=-\half+\ii\sqrthalf$ is a cube root of~$1$.

In this case $G$ still possesses complex multiplication: the period lattice is preserved by multiplication with~$\omega$. However, when the period lattice is multiplied by~$\omega$, the series $p$ is multiplied by $\omega^{-1}=\bar\omega$, hence $p=0$.

Here $\ii\Lambda=\ZZ[\omega]$ is the ring of Eisenstein numbers, which is Euclidean again. Therefore, as in the previous case, the series $q$ can be written as an Euler product:
\begin{align*}
  q&=-35\cdot\bigl(1^6+\omega^6+\bar\omega^6+(-1)^6+(-\omega)^6+(-\bar\omega)^6\bigr)\cdot\prod_{\rho}\frac{1}{1-1/\strut\rho^6}\;, \\
\intertext{(the product is taken over all Eisenstein primes up to multiplication by invertible elements $\pm1,\pm\omega,\pm\bar\omega$ of~$\ZZ[\omega]$)}
   &=-210\prod_{\Im(\rho)>0}\frac{1}{|1-1/\strut\rho^6|^2}\cdot\prod_{\Im(\rho)=0}\frac{1}{1-1/\strut\rho^6}<0\;.
\end{align*}
It follows that $D(f)=-27q^2<0$.

In this case the trinomial $f$ has a single real root and therefore $G(\RR)$ is connected (see Fig.~\ref{f:1-component}).
\begin{figure}[!h]
  \centering
  \input{1-component.pic}
  \caption{}\label{f:1-component}
\end{figure}


\begin{thebibliography}{Tim22}

\bibitem[BT72]{grp.red.add}
A. Borel and J. Tits, \emph{Compl\'{e}ments \`a l'article: ``{G}roupes r\'{e}ductifs''},
Inst. Hautes \'{E}tudes Sci. Publ. Math. \textbf{41} (1972), 253--276.

\bibitem[BG22]{pi0:red}
M. Borovoi and O. Gabber, \emph{A short proof of Timashev's theorem on the real component group of a real reductive group},
Arch. Math. \textbf{120} (2023), no.\,1, 9--13.

\bibitem[BT23]{Galois:red}
M. Borovoi and D.~A. Timashev, \emph{Galois cohomology and component group of a real reductive group},
Israel J. Math. (2023), \verb|arXiv:2110.13062|.

\bibitem[Bri17]{alg.grp}
M. Brion, \emph{Some structure theorems for algebraic groups}, Algebraic groups: structure and actions, 53--126, Proc. Sympos. Pure Math. \textbf{94}, AMS, Providence, 2017.

\bibitem[Hoc81]{alg.Lie}
G.~P. Hochschild, \emph{Basic theory of algebraic groups and {L}ie algebras}, Graduate Texts in Mathematics, vol.\,75, Springer-Verlag, New York--Berlin, 1981.

\bibitem[Kob88]{elliptic}
N. Koblitz, \emph{Introduction to elliptic curves and modular forms}, Springer-Verlag, Berlin--Heidelberg--New York--Tokyo, 1984.

\bibitem[Mat64]{Matsumoto}
H. Matsumoto, \emph{Quelques remarques sur les groupes de {L}ie alg\'{e}briques r\'{e}els}, J. Math. Soc. Japan \textbf{16} (1964), no.\,4, 419--446.

\bibitem[Ser97]{Galois}
J.-P. Serre, \emph{Galois cohomology}, Springer--Verlag, Berlin, 1997.

\bibitem[Tim22]{pi0:lin}
D.~A. Timashev, \emph{On the component group of a real algebraic group}, in ``Toric Topology, Group Actions, Geometry, And Combinatorics. Part 2'', Proc. Steklov Inst. Math. \textbf{318} (2022), 175--184, \verb|arXiv:2203.14024|.

\end{thebibliography}
\end{document}